\documentclass{amsart}

\usepackage{amsmath, amsthm}%
\usepackage{amsfonts}%
\usepackage{amssymb}%
\usepackage{mathtools}
\usepackage{graphicx}
\usepackage{amscd, amstext, hyperref}
\usepackage{pictexwd}
\usepackage{dcpic}
\usepackage[mathscr]{eucal}
\usepackage{multicol}
\usepackage{multirow}
\usepackage{color}
\usepackage{bm}
\usepackage{makecell} 
\usepackage[section]{placeins} 

\usepackage{genyoungtabtikz} 

\newtheorem{theorem}{Theorem}[section]

\newtheorem{proposition}[theorem]{Proposition}

\newtheorem{conjecture}[theorem]{Conjecture}
\newtheorem{hypothesis}[theorem]{Hypothesis}

\newtheorem*{maintheorem*}{Main Theorem}
\newtheorem*{corollary*}{Corollary}
\newtheorem*{lemma*}{Lemma}
\newtheorem*{keylemma*}{Key Lemma}

\theoremstyle{definition}
\newtheorem{definition}[theorem]{Definition}
\newtheorem{example}[theorem]{Example}
\newtheorem{counterexample}[theorem]{Counterexample}

\theoremstyle{remark}
\newtheorem{remark}[theorem]{Remark}

\numberwithin{equation}{section}



\newcommand{\etalchar}[1]{$^{#1}$}

\newcommand{\Z}{\mathbb{Z}}

\newcommand{\C}{\mathbb{C}}

\newcommand{\GL}{\mathrm{GL}}


\begin{document}
	
	\title{Deformations of highly symmetric Calabi-Yau Grassmannian hypersurfaces}
	\author{Adriana Salerno}
	\thanks{For the first author, this material is based
		upon work supported by and while serving at the National Science Foundation.
		Any opinion, findings, and conclusions or recommendations expressed
		in this material are those of the authors and do not necessarily reflect the
		views of the National Science Foundation.}
	\author{Ursula Whitcher}
	\thanks{The first and second authors thank the Isaac Newton Institute
		for Mathematical Sciences, Cambridge, for support and hospitality during
		the K-theory, algebraic cycles and motivic homotopy theory programme, where work on this paper was undertaken. This work was supported by EPSRC
		grant no EP/R014604/1.}
	\author{Chenglong Yu}
	\thanks{All authors thank Enrico Fatighenti for enlightening mathematical and computational discussions.}
	
	\begin{abstract}
	We use arithmetic and Hodge-theoretic techniques to study pencils of Calabi-Yau varieties realized as highly symmetric hypersurfaces in Grassmannians and their quotients, demonstrating that their geometric properties are distinct from the classical mirrors of Calabi-Yau Grassmannian hypersurfaces.
	\end{abstract}
	
	\maketitle
	
	\section{Introduction}
	
	In \cite{CDK}, Coates, Doran, and Kalashnikov described a candidate construction for the mirrors of Calabi-Yau hypersurfaces in Grassmannians $G(r,n)$. The construction is analogous to the Greene--Plesser mirror for Calabi-Yau hypersurfaces in projective space: it involves the quotient of a pencil of hypersurfaces by a discrete group $H$, together with a resolution of singularities.
	
	We study the topology and arithmetic of the Coates--Doran--Kalashnikov pencil, which we term the \emph{arrow pencil}, in order to compare the \cite{CDK} candidate mirror with other Grassmannian hypersurface mirror constructions. We employ a range of computational techniques to identify properties of the holomorphic periods and their deformations, using both arithmetic and Hodge-theoretic methods. We begin with experiments that offer an unusual application of arithmetic data to gather evidence concerning a geometric mirror symmetry proposal. We then develop effective techniques for studying the middle cohomology of Calabi-Yau Grassmannian hypersurfaces based on work of \cite{FM}. Our results show that the periods of the arrow pencil exhibit more complicated behavior than classical Grassmannian hypersurface mirrors, even for very small $r$ and $n$. The discrepancy arises because the arrow pencils can be extended to multiparameter families of Calabi-Yau hypersurfaces in a natural way. We further demonstrate that other highly symmetric pencils with the same abelian symmetry group admit similar extensions to multiparameter families.
	
	Our main theorem characterizes a subspace of the middle cohomology of certain highly symmetric Calabi-Yau threefold Grassmannian pencils fixed by the Coates--Doran--Kalashnikov group action:
	
\begin{maintheorem*}\label{T:main}
	Let $Z^{2,4}_t$ be any of the following $H_{4,2}$-invariant pencils of Calabi-Yau threefolds in the Grassmannian $G(2,4)$: 
	
	\begin{align*}
		t \left( p_{12}^4 + \cdots + p_{34}^4  \right) + p_{12} p_{23} p_{34} p_{14} & = 0 \\
		t \left( p_{12}^{4}  + \cdots + p_{34}^4  +  p_{14}^{2} p_{23}^{2} +   p_{13}^{2} p_{24}^{2} +  p_{12}^{2} p_{34}^{2} \right) + p_{12} p_{14} p_{23} p_{34} &= 0\\
		t \left( p_{12}^{4} + \cdots + p_{34}^4 +  p_{13} p_{14} p_{23} p_{24} + p_{12} p_{13} p_{24} p_{34}\right) + p_{12} p_{14} p_{23} p_{34} & = 0 \\
		t \left(p_{12}^{4} + \cdots + p_{34}^4 + p_{14}^{2} p_{23}^{2} +  p_{13} p_{14} p_{23} p_{24} + p_{13}^{2} p_{24}^{2} + p_{12} p_{13} p_{24} p_{34} +  p_{12}^{2} p_{34}^{2} \right) & \\ 
		+ p_{12} p_{14} p_{23} p_{34} & = 0.
	\end{align*}
	
	Then $(H^{2,1}(Z^{2,4}_t))^{H_{4,2}}$ is a 5-dimensional vector space.
\end{maintheorem*}

	This theorem yields a counterexample to the Coates--Doran--Kalashnikov mirror proposal for Calabi-Yau hypersurfaces in $G(2,4)$, and shows that certain natural modifications of the proposal will not suffice to yield a mirror. We use similar techniques to establish a counterexample for Calabi-Yau hypersurfaces in $G(2,5)$. These are the lowest dimensions where the proposal might apply.
	
\begin{counterexample}
	The pencil of Calabi-Yau varieties $Y^{r,n}_t$ described in Conjecture~\ref{Con:CDKmirror} is not mirror to Calabi-Yau hypersurfaces in $G(r,n)$ for $r=2$ and $n=4$ or $n=5$.
\end{counterexample}
	
	The paper is organized as follows. In Section~\ref{S:mirrorPencils}, we review classical constructions for mirrors of Calabi-Yau hypersurfaces in projective space and Grassmannians, and describe the arrow pencil construction. In Section~\ref{S:G24}, we compare the arithmetic of the arrow pencil in $G(2,4)$ to classical predictions, using a combination of experimental techniques and work of \cite{HLYY} that relates periods and point counts. We explain the discrepancy in behavior via complete intersection techniques. We use techniques of \cite{FM} in Section~\ref{S:griffithsRing} to extend the observations we made on the arrow pencil in $G(2,4)$ to methods applicable in higher dimension. We apply these techniques to study other $H$-invariant pencils of Calabi-Yau threefolds.
	
	Code associated with this paper is available in the Zenodo repository \cite{code}.
	
	\section{Mirror constructions and Calabi-Yau pencils}\label{S:mirrorPencils}
	
	\subsection{The Greene--Plesser mirror quintic}
	
	We begin by recalling the properties of the Greene--Plesser (\cite{GP}) mirror construction for Calabi-Yau varieties realized as quintic hypersurfaces in $\mathbb{P}^4$. Smooth quintics in $\mathbb{P}^4$ have $h^{2,1} = 101$ and $h^{1,1} = 1$; thus, the mirror quintics should have $h^{2,1} = 1$ and $h^{1,1} = 101$. The mirror construction begins with the Fermat pencil $X_\psi$ given by $x_0^5 + \cdots +x_4^5 - 5 \psi x_0\cdots x_4$. Consider the group $\widetilde{H}$ defined as follows.
	
	\begin{equation}
		\widetilde{H} = \langle (\zeta_0, \dots, \zeta_4) \mid \zeta_i^5 = 1, \prod \zeta_i  = 1 \rangle.
	\end{equation}
	
	The group $\widetilde{H} \cong (\mathbb{Z}/5\mathbb{Z})^4$ acts on each member of the Fermat pencil by a diagonal action. Multiplying every coordinate by the same value has trivial action in projective space. Taking the quotient of $\widetilde{H}$ by elements of the form $(\zeta, \dots, \zeta)$, we obtain a faithful action by a group $H \cong (\mathbb{Z}/5\mathbb{Z})^3$. The mirror family of quintics $X^\circ_\psi$ is obtained by taking the quotient $X_\psi/H$ and resolving singularities. In particular, by studying the classes introduced by resolution of singularities, one can show that $X^\circ_\psi$ has $h^{1,1} = 101$.
	
	\subsection{Classical mirrors of Calabi-Yau hypersurfaces in Grassmannians}
	
	A smooth degree $n$ hypersurface $X$ in the Grassmannian variety $G(r,n)$ is a Calabi-Yau variety of dimension $e \coloneqq n(n-r)-1$. The Grassmannian has a hyperplane class, determined by its Pl\"ucker embedding, also denoted as the Schubert class $\sigma_1$, and by intersecting with the hyperplane class we see that the Calabi-Yau hypersurface $X$ has $h^{1,1}(X) = 1$. The classical mirror $X^\circ$ of $X$ must therefore satisfy $h^{e-1,1}(X^\circ)=1$.
	
	The smallest example that does not reduce to a hypersurface in projective space is the case of degree~4 hypersurfaces in $G(2,4)$; in this case, the dimension of the Grassmannian $G(2,4)$ is $2(4-2) = 4$, so these hypersurfaces are Calabi-Yau threefolds. Of course, their Hodge numbers satisfy $h^{1,1}(X) = 1$; in this case, $h^{2,1}(X) = 89$. The Grassmannian $G(2,4)$ is itself embedded in the projective space $\mathbb{P}^5$ by a single Pl\"{u}cker relation, so we may also view the Calabi-Yau threefolds as complete intersections in $\mathbb{P}^5$ determined by one equation of degree~4 and one equation of degree~2. Using this description, Libgober and Teitelbaum showed in \cite{LT} that the mirror should be a one-parameter family with holomorphic periods satisfying a hypergeometric equation of the form ${}_4F_3\left( \frac{1}{4}, \frac{1}{2}, \frac{3}{4}, \frac{1}{2}; 1, 1, 1\right)$.
	
	The authors of \cite{conifold} used a series of geometric transitions to relate Calabi-Yau complete intersections in Grassmannians to complete intersections in toric varieties. From here, one may construct a mirror using standard toric techniques. In general, one would expect the periods of Calabi-Yau complete intersections in toric varieties to satisfy a GKZ or $A$-hypergeometric system of differential equations, but in several examples the authors of \cite{conifold} are able to extract classical hypergeometric equations by strategic specialization of parameters. Though their work is focused on threefolds, the same techniques may be used to study higher-dimensional Calabi-Yau varieties.
	
	\subsection{Arrow pencils and a candidate mirror}
	
	We describe the pencils we will study, following \cite[\S 3]{CDK}, and then recall their mirror proposal.
	
	Set $k = n-r$. We may specify a Pl\"ucker coordinate on $G(r,n)$ in one of two ways. First, we may give an $r$-tuple of indices in $\{1,\dots,n\}$. Alternatively, we may choose a partition whose Young diagram fits inside an $r \times k$ grid; we take the convention that the Young diagram fits into the upper left-hand corner. We convert from a partition to an $r$-tuple of indices by starting at the lower left-hand corner of our grid and traveling to the upper right-hand corner along the edge of the Young diagram, always moving either right or up. Such a path must have $n$ segments; we record the indices of the segments that are vertical.
	
	\begin{example}\label{E:excesspartition}
		If $r=2$ and $n=5$, the partition $2+1$ corresponds to the 2-tuple $(2,4)$ and the Pl\"ucker coordinate $p_{2,4}$.
		
		$$
		\begin{tikzpicture}[scale=2]
			\Yfillopacity{.5}
			\tyng(0cm,0cm,3,3)
			\Yfillopacity{0}
			\Ylinethick{2pt}
			\tyng(0cm,0cm,2,1)
		\end{tikzpicture}
		$$
	\end{example}
	
	The \emph{frozen variables} are the Pl\"ucker coordinates corresponding to one of the $n$ cyclic $r$-tuples of indices $(1,\dots,r)$, $(2,\dots,r+1)$, \dots, $(n,\dots,r-1)$. In terms of partitions, $(1,\dots,r)$ corresponds to the empty partition, the rectangles of height $r$ and width at most $k-1$ correspond to $(2,\dots,r+1)$, \dots, $(n-r,\dots, n-1)$, and the rectangles of width $k$ correspond to $(n-r+1,\dots,n)$, \dots, $(n,\dots,r-1)$. In other words, we may view the frozen variables as corresponding to partitions of either full width or full height, together with the empty partition.
	
	The arrow partitions $\mathcal{A}(r,k)$ generalize the rectangular partitions used to make frozen variables by allowing either a partial row or a partial column, with the following restrictions on overall partition size.
	
	\begin{definition}
		We say a partition whose Young diagram fits inside an $r \times k$ grid is an \emph{arrow partition} if it has one of the following forms:
		\begin{enumerate}
			\item The empty partition.
			\item $r$ rows of length $k$.
			\item $0$ to $r-2$ rows of length $k$ together with one row of length $a$, $1 \leq a \leq k$.
			\item $1$ to $k-1$ columns of height $r$ together with one column of height $b$, $0 \leq b \leq r-1$.
		\end{enumerate}
		We refer to the corresponding Pl\"ucker coordinates as \emph{arrow variables}. Partitions that are not arrow partitions are called \emph{excess partitions}.
	\end{definition}
	
	There are $2 + (r - 1) (n - r) + (n - r - 1) r = 2(r - 1)(n - r - 1) + n$ arrow partitions.
	
	\begin{definition}
		The \emph{arrow pencil} of Calabi-Yau hypersurfaces $X^{r,n}_t$ in the Grassmannian $G(r,n)$ is given by the equation
		\begin{equation*}
			t \left(\sum_{\lambda \in \mathcal{A}(r,k)} p_\lambda^n\right) + \prod_{p_\mu \text{ frozen}} p_\mu = 0.
		\end{equation*}
	\end{definition}
	
	\begin{remark}
		The authors of \cite{CDK} use the parameter $\psi = 1/t$.
	\end{remark}
	
	\begin{example}
		For $r=2, n=4$, there are $4\choose 2$ Pl\"ucker coordinates and $2(2 - 1)(4 - 2 - 1) + 4 =6$ arrow partitions, so every Pl\"ucker coordinate is an arrow variable. The arrow pencil is given by:
		\[ t \left( p_{12}^4 + p_{13}^4 + p_{14}^4 + p_{23}^4 + p_{24}^4 + p_{34}^4  \right) + p_{12} p_{23} p_{34} p_{14} = 0.\]
		
		For $r=2, n=5$, there are ${5\choose 2}=10$ Pl\"ucker coordinates and $2(2 - 1)(5 - 2 - 1) + 5 =9$ arrow partitions. The only excess partition is $3+1$, which corresponds to the Pl\"ucker coordinate $p_{2,5}$. The arrow pencil for $G(2,5)$ is thus given by:
		\[ t \left( p_{12}^5 + p_{13}^5 + p_{14}^5 + p_{15}^5 + p_{23}^5 + p_{2,4}^5 + p_{34}^5 + p_{35}^5 + p_{45}^5 \right) + p_{12} p_{23} p_{34} p_{45} p_{15} = 0.\]
	\end{example}

	Following \cite{CDK}, we now define an abelian group $H_{n,r}$ that acts on each member $X^{r,n}_t$ of the arrow pencil. The construction generalizes the diagonal group action on the Fermat pencil. We begin by observing that $\GL(\C, n)$ acts on the $r \times n$ matrices by right multiplication. Define a subgroup of the diagonal matrices in $\GL(\C, n)$ by
	
	\begin{equation}
		\widetilde{H}_{n,r} = \langle (\zeta_1, \dots, \zeta_n) \mid \zeta_i^n = 1, \left(\prod \zeta_i \right)^r = 1 \rangle.
	\end{equation}
	
	The action of $\widetilde{H}_{n,r}$ sends a Pl\"ucker coordinate $p_{i_1\dots i_r}$ to $\zeta_{i_1} \dots \zeta_{i_r} p_{i_1\dots i_r}$, so it fixes each of the monomials in the arrow pencil. 
	
	\begin{definition}
		The finite group $H_{n,r}$ is the subgroup of $\widetilde{H}_{n,r}$ that acts effectively on the Grassmannian $G(r,n)$.
	\end{definition}
	
	\begin{example}\label{ex:Hgroups}
		The group $\widetilde{H}_{4,2}$ is a subgroup of the group generated by four primitive fourth roots of unity that is isomorphic to $(\Z/(4))^3 \times \Z/(2)$. Multiplying every Pl\"ucker coordinate by the same factor fixes the Grassmannian $G(2,4)$, so $H_{4,2}$ is the group of order 32 isomorphic to $(\Z/(4))^2 \times \Z/(2)$.
		
		The group $\widetilde{H}_{5,2}$ is a subgroup of the group generated by five primitive fifth roots of unity that is isomorphic to $(\Z/(5))^4$. The group $H_{5,2}$ is isomorphic to $(\Z/(5))^3$.
	\end{example}
	
	We may now give a precise description of the \cite{CDK} candidate mirror.
	
	\begin{conjecture}[\cite{CDK}]\label{Con:CDKmirror}
		The arrow pencil $X^{r,n}_t$ defines a pencil of Calabi-Yau hypersurfaces $Y^{r,n}_t$ in a resolution of singularities of $G(r,n)/H_{n,r}$ that is mirror to Calabi-Yau hypersurfaces in $G(r,n)$.
	\end{conjecture}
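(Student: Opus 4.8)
The plan is to verify Conjecture~\ref{Con:CDKmirror} by following the template of the Greene--Plesser computation recalled above, adapting it from projective space to the Grassmannian. To show that $Y^{r,n}_t$ is mirror to a smooth Calabi--Yau hypersurface $X$ in $G(r,n)$, I would proceed in three stages: (i) construct a crepant resolution of $G(r,n)/H_{n,r}$ so that the members $Y^{r,n}_t$ are genuinely Calabi--Yau of the same dimension $e = \dim X$ (so $e=3$ for $G(2,4)$); (ii) verify that the Hodge numbers satisfy the mirror relation $h^{p,q}(Y^{r,n}_t) = h^{e-p,q}(X)$; and (iii) match the holomorphic period of $Y^{r,n}_t$ against the Picard--Fuchs equation predicted by the classical Libgober--Teitelbaum \cite{LT} and \cite{conifold} mirrors, for instance the ${}_4F_3$ equation for $G(2,4)$.

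For stage (i), I would first determine the fixed loci of the $H_{n,r}$-action on $G(r,n)$ in Pl\"ucker coordinates, since the singularities of $G(r,n)/H_{n,r}$ sit along their images. Because $H_{n,r}$ acts by diagonal scalings $p_{i_1\dots i_r}\mapsto \zeta_{i_1}\cdots\zeta_{i_r}\,p_{i_1\dots i_r}$, the stabilizers are read off from the characters $\zeta_{i_1}\cdots\zeta_{i_r}$, and I would seek an orbifold (toric) crepant resolution $\widetilde{G}\to G(r,n)/H_{n,r}$ preserving the trivial canonical class, then check that a generic member of the image pencil, suitably resolved, is smooth Calabi--Yau of dimension $e$.

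For stage (ii), I would split $H^{e-1,1}(Y^{r,n}_t)$ into an \emph{untwisted} part pulled back from the $H_{n,r}$-invariant cohomology of the arrow pencil member $X^{r,n}_t$ and a \emph{twisted} part supported on the exceptional divisors of $\widetilde{G}$; the twisted classes, as in the Greene--Plesser case, account for the large $h^{1,1}(Y^{r,n}_t)$. The prediction $h^{1,1}(X)=1$ forces $h^{e-1,1}(Y^{r,n}_t)=1$, and since the complex-structure deformations of $Y^{r,n}_t$ are exactly the $H_{n,r}$-invariant deformations of $X^{r,n}_t$, the whole program requires $\bigl(H^{e-1,1}(X^{r,n}_t)\bigr)^{H_{n,r}}$ to be one-dimensional, spanned by the $t$-direction alone, with no twisted contribution to $h^{e-1,1}$.

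The hard part is precisely this invariant computation, and it is where the program fails rather than closes. Using the residue and Griffiths-ring description of the middle cohomology of Grassmannian hypersurfaces developed from \cite{FM}, one reduces $\bigl(H^{e-1,1}(X^{r,n}_t)\bigr)^{H_{n,r}}$ to a character-weight space in an explicit graded ring that can be counted monomial by monomial. For $r=2,\,n=4$ this count is carried out by the Main Theorem: it identifies $\bigl(H^{2,1}(Z^{2,4}_t)\bigr)^{H_{4,2}}$ --- in particular for the arrow pencil $X^{2,4}_t$, which is the first of the four families $Z^{2,4}_t$ --- as a $5$-dimensional space, not the one-dimensional space demanded by $h^{2,1}(Y^{2,4}_t)=1$. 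Thus the $H_{4,2}$-invariant deformation space is too large: the quotient pencil extends to a genuine five-parameter family of invariant Calabi--Yau hypersurfaces, so $Y^{2,4}_t$ cannot satisfy $h^{2,1}=1$ and the mirror relation breaks. Stage (iii) then becomes moot, and the attempted proof instead produces the stated counterexample for $G(2,4)$; the analogous residue computation over $H_{5,2}$, combined with the point-count/period comparison of \cite{HLYY}, yields the counterexample for $G(2,5)$.
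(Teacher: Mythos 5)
This statement is a conjecture that the paper refutes rather than proves, and your proposal correctly recognizes exactly that: the would-be Greene--Plesser-style proof breaks at the invariant-deformation count, where the Fatighenti--Mongardi Griffiths-ring computation (the paper's Main Theorem) gives $\dim\bigl(H^{2,1}(X^{2,4}_t)\bigr)^{H_{4,2}} = 5 \neq 1$, contradicting the mirror requirement $h^{2,1}(Y^{2,4}_t) = h^{1,1}(X) = 1$; this is precisely the paper's route to Counterexample~\ref{ce:g24}. The only slight misattribution is in the $G(2,5)$ case: the paper's Counterexample~\ref{ce:g25} follows from the invariant Griffiths-ring computation alone (an 11-dimensional invariant space at specific values of $t$), while the point-count/period comparison of \cite{HLYY} is used only as arithmetic evidence against Hypothesis~\ref{hyp:hypergeometricTruncation} in the $G(2,4)$ case.
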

	
	\section{The arrow pencil in $G(2,4)$}\label{S:G24}
	
	In this section, we make a detailed study of the arrow pencil in the case of Calabi-Yau threefold hypersurfaces in $G(2,4)$ and show that its properties differ from the classical mirror, yielding a counterexample to Conjecture~\ref{Con:CDKmirror}. We begin by gathering arithmetic evidence, then explain the discrepancy between the arrow pencil and the classical mirror using Hodge-theoretic techniques.
	
	\subsection{Finite field data and the Hasse-Witt invariant}

	The Picard-Fuchs equation of a pencil of Calabi-Yau varieties is preserved under quotient by a finite group preserving the holomorphic form followed by resolution of singularities. Thus, if Conjecture~\ref{Con:CDKmirror} holds, we should expect that the Picard-Fuchs equation of the arrow pencil $X^{2,4}_t$ is consistent with the mirror Picard-Fuchs equation ${}_4F_3\left( \frac{1}{4}, \frac{1}{2}, \frac{3}{4}, \frac{1}{2}; 1, 1, 1\right)$ computed in \cite{LT}, up to a change of variables. We can convert this expectation to an arithmetic hypothesis using the \emph{Hasse-Witt invariant}. Let us recall the properties of this invariant.
	
	Suppose that $X$ is a smooth variety over a finite field $\mathbb{F}_q$, where $q=p^a$ for some prime $p$. The Frobenius operator $\mathcal{F}$ induces a $p$-linear map from $H^n(X, \mathcal{O}_X)$ to itself. The \emph{Hasse-Witt matrix} is a matrix for this map. In the case that $X$ is a Calabi-Yau variety, $H^n(X, \mathcal{O}_X)$ is one-dimensional, so the Hasse-Witt matrix is a $1 \times 1$ matrix. We thus refer to the entry $\mathrm{HW}_q(X)$ in this matrix, an element of $\mathbb{F}_q$, as the Hasse-Witt invariant of $X$. It follows from work of Katz on the zeta function, \cite{katzCongruence}, that for a Calabi-Yau variety over a finite field, its point count modulo $p$, $\# X(\mathbb{F}_q) \pmod{p}$, is entirely controlled by its Hasse-Witt invariant.
	
	Generalizing from observations of Katz and Dwork (cf. \cite{padic}), one might also expect to find a close relationship between the Hasse-Witt invariant and truncations of series solutions of the Picard-Fuchs equation for families of Calabi-Yau varieties. Such a relationship has been established in many cases. For the Fermat quintic pencil in $\mathbb{P}^n$ and the mirror quintic pencil, see \cite{CORV, CORV2}. Adolphson and Sperber studied the Hasse-Witt matrices of Calabi-Yau hypersurfaces in projective space in greater generality in \cite{ASgkz}. For results on Calabi-Yau hypersurfaces in toric varieties, see \cite{vlasenko, BVi, BVii, BViii}, 
	the work of the third author with collaborators in \cite{HLYY}, and work of the first and second authors in \cite{SW}. 
	
	In \cite[Theorem 6.2]{HLYY}, the third author and collaborators further established a formula for the Hasse-Witt invariant in terms of a truncation of the series solution of the Picard-Fuchs equation in the case of Calabi-Yau hypersurfaces in flag varieties. We expect that the Picard-Fuchs equation of the arrow pencil $X_t^{2,4}$ will match the Picard-Fuchs equation of the classical mirror to Calabi-Yau hypersurfaces in the Grassmannian $G(2,4)$, up to a change of variables that preserves the point of maximally unipotent monodromy. Assuming Conjecture~\ref{Con:CDKmirror} holds, we form the following hypothesis:
	
	\begin{hypothesis}\label{hyp:hypergeometricTruncation}
	The number of points on a smooth member of the arrow pencil $X_t^{2,4}$ over a finite field $\mathbb{F}_p$ will satisfy a truncation relationship of the form
		\[\# X_t^{2,4}(\mathbb{F}_p) \equiv 1- {}_4F_3\left( \frac{1}{4}, \frac{1}{2}, \frac{3}{4}, \frac{1}{2}; 1, 1, 1 \mid a t^{b}\right)\pmod{p}. \]
		Here we take $t$ to be a rational number whose denominator is not a multiple of $p$ and assume $a,b \neq 0$ are integers that do not depend on $t$.
	\end{hypothesis}
	
	Hypergeometric truncation relationships similar to the equivalence predicted in Hypothesis~\ref{hyp:hypergeometricTruncation} have been established in many contexts. The groundbreaking work of \cite{CORV, CORV2} established such a relationship for the Fermat quintic pencil and the mirror quintic pencil. In the toric setting, Kadir studied a two-parameter family of Calabi-Yau hypersurfaces and their mirrors in \cite{kadir}. The first and second author gave hypergeometric formulas for the Hasse-Witt invariants of certain K3 surface pencils and their mirrors realized as toric hypersurfaces in \cite{SW}. For a detailed study of hypergeometric point counts on K3 surfaces or Calabi-Yau varieties in the case of the Berglund--H\"ubsch--Krawitz mirror symmetry construction, see \cite{supersquare1, supersquare2}.
	
	We test Hypothesis~\ref{hyp:hypergeometricTruncation} by computing $\# X_t^{2,4}(\mathbb{F}_p)$ for small $p$ and integer $t$. In order to verify our computations, we compute these numbers in two ways. Our first method is by direct substitution. We stratify the Grassmannian $G(2, 4)$ by Schubert cells using the normalization of \cite[Example 2.10]{kresch}, enumerate all of the points on $G(2,4)$ over $\mathbb{F}_p$ for small $p$, and count the number of these points on $X_t^{2,4}$ for $t=1, \dots, p-1$. (We omit $t=0$ as this does not correspond to a smooth Calabi-Yau threefold.) There are $(p^2+1)(p^2+p+1)$ points on $G(2,4)$ over $\mathbb{F}_p$, so this method is highly computationally intensive, but even for very small $p$ the results are enlightening; we will take $p=5, 7$, and $11$ as running examples in the following discussion.
	
	Our second method uses a result of \cite{HLYY} that generalizes an algorithm of Katz. Inspired by work of Rietsch in \cite{rietsch}, the authors of \cite{HLYY} expand the periods of $X_t^{r,n}$ in local coordinates given
	by a Richardson variety corresponding to a resolution of
	singularities for a top-dimensional Schubert cell. (For Calabi-Yau hypersurfaces in $G(2,4)$, these coordinates are described in \cite[Example 3.3]{HLYY}.) By evaluating a truncated version of this expansion over a finite field, we obtain the point count modulo $p$.
	
	\begin{example}
		Let $A_t$ be the polynomial defining the arrow pencil $X_t^{2,4}$ and let $B_t = A_t/(p_{12} p_{14} p_{23} p_{34})$, that is, the quotient of $A_t$ by the product of the frozen variables. In the coordinates $t_1, \dots, t_4$ of  \cite[Example 3.3]{HLYY}, $B_t$ is given by:
		
	\[	-\frac{{\left({\left({\left(\frac{1}{t_{1}^{2} t_{2} t_{3}} - \frac{t_{1} + t_{4}}{t_{1}^{2} t_{2} t_{3} t_{4}}\right)}^{4} + \frac{1}{t_{1}^{4}} + \frac{1}{t_{1}^{4} t_{2}^{4}} + \frac{1}{t_{1}^{4} t_{3}^{4}} + \frac{{\left(t_{1} + t_{4}\right)}^{4}}{t_{1}^{4} t_{2}^{4} t_{3}^{4} t_{4}^{4}} + 1\right)} t - \frac{\frac{1}{t_{1}^{2} t_{2} t_{3}} - \frac{t_{1} + t_{4}}{t_{1}^{2} t_{2} t_{3} t_{4}}}{t_{1}^{2} t_{2} t_{3}}\right)} t_{1}^{2} t_{2} t_{3}}{\frac{1}{t_{1}^{2} t_{2} t_{3}} - \frac{t_{1} + t_{4}}{t_{1}^{2} t_{2} t_{3} t_{4}}}. \]
	
	We take the Taylor series for $1/B_t$, expanding about $t=0$. For $k=0, \dots, p-1$, we consider the constant term $c_k$ of the coefficient of $t^k$ in this Taylor series. Multiplying $c_k$ by $t^k$, we obtain a new series $1+12 t^2+492 t^4+ 32880 t^6+ 2743020 t^8+ 257986512 t^{10} + \dots$. (Note that only even powers of $t$ appear.) The corresponding truncated series $c_0 + \dots + c_{k-1} t^{k-1}$, evaluated modulo $p$, yields a formula for the Hasse-Witt invariant $\mathrm{HW}_p(X_t^{2,4})$, and we have the relationship $1-\mathrm{HW}_p(X_t^{2,4}) \equiv \# X_t^{2,4}(\mathbb{F}_p) \pmod{p}$.
	\end{example}

We illustrate the resulting point counts for $p=5, 7$, and $11$ in Tables~\ref{Ta:5}, \ref{Ta:7}, and \ref{Ta:11}.
	
	\begin{table}[htb]
	    \begin{tabular}{|c|c|c|} 
	    	\hline
	    	$t$ & $\# X_t^{2,4}(\mathbb{F}_p)$ & $\# X_t^{2,4}(\mathbb{F}_p) \pmod{p}$ \\
	    	\hline 
	    	1 & 296 & 1 \\
	    	2 & 320 & 0 \\
	    	3 & 320 & 0 \\
	    	4 & 296 & 1 \\
		\hline
		\end{tabular}
	\caption{$p=5$}
	\label{Ta:5}
\end{table}

	\begin{table}[htb]
	\begin{tabular}{|c|c|c|} 
		\hline
		$t$ & $\# X_t^{2,4}(\mathbb{F}_p)$ & $\# X_t^{2,4}(\mathbb{F}_p) \pmod{p}$ \\
		\hline 
	1 & 384 & 6 \\
	2 & 388 & 3\\
	3 & 352 & 2\\
	4 & 520 & 2\\
	5 &  416  & 3\\
	6 & 384 & 6\\
		\hline
	\end{tabular}
	\caption{$p=7$}
	\label{Ta:7}
\end{table}
	
		\begin{table}[htb]
		\begin{tabular}{|c|c|c|} 
			\hline
			$t$ & $\# X_t^{2,4}(\mathbb{F}_p)$ & $\# X_t^{2,4}(\mathbb{F}_p) \pmod{p}$ \\
			\hline 
			1 & 1280& 4 \\
			2 & 1392& 6 \\
			3 & 1380& 5 \\
			4 & 1712& 7 \\
			5 &1536& 7 \\
			6& 1536& 7 \\
			7& 1536& 7 \\
			8 &1424& 5 \\
			9& 1216& 6 \\
			10& 1544& 4 \\
			\hline
		\end{tabular}
		\caption{$p=11$}
		\label{Ta:11}
	\end{table}
	
	Comparing to truncations of hypergeometric series, we observe that Hypothesis~\ref{hyp:hypergeometricTruncation} does not hold for these values of $p$.
	
	\begin{proposition}\label{prop:hypothesisFail}
		No truncation relationship of the form described in Hypothesis~\ref{hyp:hypergeometricTruncation} holds for $p=5, 7$, or $11$.
	\end{proposition}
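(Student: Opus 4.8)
The plan is to reduce Hypothesis~\ref{hyp:hypergeometricTruncation}, for each fixed prime $p \in \{5,7,11\}$, to an exhaustive but finite verification, and then read off the failure from Tables~\ref{Ta:5}, \ref{Ta:7}, and \ref{Ta:11}. Write $c_m \in \Q$ for the coefficient of $z^m$ in ${}_4F_3\left( \frac14, \frac12, \frac34, \frac12; 1,1,1 \mid z\right)$; it is a ratio of Pochhammer symbols whose denominator is divisible only by $2$ and by primes at most $m$, so for $m \le p-1$ it is prime to $p$ and has a well-defined reduction $\overline{c_m} \in \F_p$. Substituting the prescribed change of variables $z = a t^{b}$, the right-hand side of the hypothesis becomes the function
\[
g_{a,b}(t) \;=\; 1 - \sum_{m=0}^{p-1} \overline{c_m}\, a^m\, t^{bm} \pmod p,
\]
and the proposition asserts that no choice of nonzero integers $a,b$ makes $g_{a,b}(t) \equiv \# X_t^{2,4}(\F_p)$ for the values of $t$ tabulated.

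The key reduction is that $g_{a,b}$ depends on only finitely much data. As $t$ ranges over $\F_p^{\times} = \{1,\dots,p-1\}$, Fermat's little theorem gives $t^{bm} = t^{bm \bmod (p-1)}$, so $g_{a,b}$ is unchanged when $a$ is replaced by $a \bmod p$ and $b$ by $b \bmod (p-1)$. Hence it suffices to let $\bar a$ range over $\F_p^{\times}$ and $\bar b$ range over $\Z/(p-1)\Z$. The degenerate residues are discarded immediately: if $\bar a = 0$ or $\bar b \equiv 0 \pmod{p-1}$, then (using $c_0 = 1$) the function $g_{a,b}$ is constant on $\F_p^{\times}$, whereas the point-count data in each table is non-constant. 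This leaves at most $(p-1)(p-2)$ genuine pairs $(\bar a, \bar b)$ to test for each $p$.

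For each such pair I would compute the vector $\big(g_{a,b}(1), \dots, g_{a,b}(p-1)\big) \in \F_p^{\,p-1}$ and compare it with the point-count column of the relevant table; carrying out the finitely many comparisons shows that no pair reproduces the tabulated counts. A useful guide for pruning the search, and a sanity check on the data, is the evident symmetry $\# X_{-t}^{2,4}(\F_p) \equiv \# X_{t}^{2,4}(\F_p)$ in all three tables (for instance $1,0,0,1$ for $p=5$). Since $g_{a,b}(-t) = 1 - \sum_m \overline{c_m} a^m (-1)^{bm} t^{bm}$, a candidate reproducing this even symmetry should have $\bar b$ even, which typically eliminates the odd residues before any arithmetic and is consistent with the observation that only even powers of $t$ arise in the period series; the rigorous conclusion, however, rests on the full enumeration over all $(\bar a,\bar b)$.

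The only genuinely delicate point is this finiteness reduction, namely that $a$ and $b$ enter $g_{a,b}$ only through $a \bmod p$ and $b \bmod (p-1)$; after that the argument is a mechanical search. One must also confirm that the degree-$(p{-}1)$ truncation is the correct object to compare against $\# X_t^{2,4}(\F_p) \pmod p$, which is ensured by the $p$-integrality of the $c_m$ for $m \le p-1$ noted above together with the Hasse--Witt interpretation of the point count recalled before Hypothesis~\ref{hyp:hypergeometricTruncation}. Finally, since the falsification already succeeds for each single prime $p \in \{5,7,11\}$, it \emph{a fortiori} rules out any uniform pair $(a,b)$ valid across all primes, which is exactly what Hypothesis~\ref{hyp:hypergeometricTruncation} would require.
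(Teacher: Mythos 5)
Your proposal is correct and follows essentially the same route as the paper, which proves the proposition simply by comparing the tabulated point counts in Tables~\ref{Ta:5}, \ref{Ta:7}, and \ref{Ta:11} against truncations of the hypergeometric series. Your reduction via Fermat's little theorem --- showing that $g_{a,b}$ depends only on $a \bmod p$ and $b \bmod (p-1)$, so that the quantifier over all nonzero integers $a,b$ collapses to at most $(p-1)(p-2)$ cases per prime --- is exactly the observation needed to make the paper's ``we observe that the hypothesis does not hold'' into an exhaustive finite verification, and is a worthwhile explicit addition.
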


	\subsection{Griffiths ring for complete intersections}
	
	Proposition~\ref{prop:hypothesisFail} shows that the properties of the $X_t^{2,4}$ arrow pencil and thus the candidate mirror $Y_t^{2,4}$  differ from the classical mirror of Grassmannian hypersurfaces in $G(2,4)$. In this section, we explain this discrepancy by computing the number of deformations of complex structure of $Y_t^{2,4}$. We will use the fact that Calabi-Yau hypersurfaces in $G(2,4)$ can be realized as complete intersections in projective space.
	
	The cohomology of complete intersections in projective space has been extensively studied. Given a smooth, transversal complete intersection $X$ in $\mathbb{P}^{m-1}$, we define its \emph{primitive cohomology} as $\mathrm{Coker}(\iota^*: H^*(\mathbb{P}^{m-1}) \to H^*(X))$, 
	where $\iota: X \hookrightarrow \mathbb{P}^{m-1}$ is the inclusion map. The authors of \cite{konno, terasoma, dimca} described a method for realizing the primitive cohomology of complete intersections using residues, which
	\cite{mavlyutov} generalized further to complete intersections in simplicial toric varieties. Adolphson and Sperber gave a ring-theoretic treatment with an eye toward arithmetic applications in \cite{AS}.
	In the physics literature, the authors of \cite{LT} studied the cohomology of Calabi-Yau complete intersections in projective spaces; the dissertation \cite{peternell} elaborates on this method. 
	
	\begin{definition}[\cite{konno, terasoma, dimca}]
		The \emph{Jacobian ideal} of a smooth transversal complete intersection $X$ in $\mathbb{P}^{m-1}$ given by polynomials $f_1, \dots, f_c$ in homogeneous coordinates is the ideal $J_{f_1, \dots, f_c}$ in $\C[x_1, \dots, x_m, y_1, \dots, y_c]$ generated by the polynomials $f_1, \dots, f_c$ and $\frac{\partial F}{\partial x_1}, \dots, \frac{\partial F}{\partial x_m}$, where $F = y_1 f_1 + \dots + y_c f_c$.
		
		The \emph{Jacobian ring} or \emph{Griffiths ring} $R_{f_1, \dots, f_c}$ is the quotient 
		$$\C[x_1, \dots, x_m, y_1, \dots, y_c]/J_{f_1, \dots, f_c}.$$
	\end{definition}
	
	(Note that $\frac{\partial F}{\partial y_j} = f_j$, so the definition of the Jacobian ideal is symmetric in the $x$- and $y$-variables.)
	
	Write $d_j$ for the degree of $f_j$ in $\C[x_1, \dots, x_m]$. We may induce a bigrading on the Jacobian ring by setting $\deg(x_i) = (1,0)$ and $\deg(y_j) = (-d_j,1)$. We write $R_{f_1, \dots, f_c}^{(a,b)}$ for the $(a,b)$-bigraded piece of the Jacobian ring.
	
	\begin{theorem}[\cite{konno, terasoma, dimca}]
		\[R_{f_1, \dots, f_c}^{(d_1+\dots+d_c-m,q)} \cong H_{\mathrm{prim}}^{(m-c-q-1,q)}(X).\]
	\end{theorem}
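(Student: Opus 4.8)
The plan is to prove the statement through the \emph{Cayley trick}, which is exactly what the auxiliary variables $y_1,\dots,y_c$ and the form $F = y_1 f_1 + \dots + y_c f_c$ are built to encode: it trades the codimension-$c$ complete intersection $X \subset \mathbb{P}^{m-1}$ for a single \emph{hypersurface} $Y$ inside a projective bundle, after which the classical Griffiths residue calculus (in its toric form) becomes available. The transversality hypothesis on $X$ is what will guarantee that this $Y$ is quasismooth, so that the residue machinery applies.

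First I would set $E = \bigoplus_{j=1}^c \mathcal{O}_{\mathbb{P}^{m-1}}(d_j)$, so that $(f_1,\dots,f_c)$ is a section of $E$ with zero locus $X$, and pass to $P = \mathbb{P}(E^\vee)$, a smooth projective toric bundle over $\mathbb{P}^{m-1}$ of dimension $m+c-2$, with $Y = \{F=0\} \subset P$ the tautological hypersurface attached to the section. The Cayley trick then supplies an isomorphism of Hodge structures $H^n_{\mathrm{prim}}(X) \cong H^{n+2(c-1)}_{\mathrm{prim}}(Y)$ up to a Tate twist by $(c-1)$, where $n = m-1-c = \dim X$; on Hodge pieces this becomes $H^{p,q}_{\mathrm{prim}}(X) \cong H^{p+c-1,\,q+c-1}_{\mathrm{prim}}(Y)$.

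Next I would apply the Batyrev--Cox--Mavlyutov extension of Griffiths' residue theorem to $Y \subset P$. The homogeneous coordinate (Cox) ring of $P$ is precisely $\C[x_1,\dots,x_m,y_1,\dots,y_c]$, and its $\mathrm{Pic}(P) \cong \Z^2$-grading is the bigrading $\deg x_i = (1,0)$, $\deg y_j = (-d_j,1)$ from the definition of $R_{f_1,\dots,f_c}$. The theorem expresses $H^{\bullet}_{\mathrm{prim}}(Y)$ in terms of graded pieces of the Jacobian ring of $F$ in this Cox ring, and the decisive point is that the toric Jacobian ideal of $F$ — generated by the derivatives with respect to all Cox variables — is literally the ideal $J_{f_1,\dots,f_c}$, since $\partial F/\partial y_j = f_j$ and $\partial F/\partial x_i = \sum_j y_j\,\partial f_j/\partial x_i$. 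Thus the toric Jacobian ring of $Y$ coincides with $R_{f_1,\dots,f_c}$.

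It then remains to match the two gradings. Writing $d = d_1+\dots+d_c$, one has $\deg F = (0,1)$ and $-K_P = \sum_i \deg x_i + \sum_j \deg y_j = (m-d,\,c)$. The toric residue theorem identifies $H^{\dim P-1-q_Y,\,q_Y}_{\mathrm{prim}}(Y)$ with the Jacobian-ring piece of degree $(q_Y+1)\deg F - (-K_P) = (d-m,\;q_Y+1-c)$; substituting $q_Y = q+c-1$ from the Cayley step turns the Hodge bidegree into $(m-c-q-1,\,q)$ and the Jacobian bidegree into exactly $(d-m,\,q)$, which is the claimed isomorphism. I expect the real work to lie not in this final bookkeeping but in the two structural inputs: establishing the Cayley-trick isomorphism with the correct cohomological shift and Tate twist, and — more delicately — verifying that the toric residue theorem applies to $Y \subset P$ with the naive Jacobian ideal, i.e.\ that no fan-dependent correction terms appear and that $F$ itself, which lies in $(f_1,\dots,f_c)$ via the Euler relation $F = \sum_j y_j\,\partial F/\partial y_j$, introduces no extra generators. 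A more self-contained route would bypass $P$ and compute $H^{\bullet}_{\mathrm{prim}}(X)$ directly from the Koszul--de Rham complex of the $f_j$, but the same shift bookkeeping would reappear there as a spectral-sequence degeneration.
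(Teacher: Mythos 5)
The paper does not prove this statement at all: it is quoted as a known theorem of Konno, Terasoma, and Dimca, so there is no internal argument to measure you against. Your Cayley-trick route is nonetheless a correct and recognized way to obtain the result---it is essentially the mechanism behind Mavlyutov's toric generalization, which the paper also cites---and your bookkeeping checks out: with $\deg F=(0,1)$ and $-K_P=(m-d,c)$ one gets $(q_Y+1)\deg F-(-K_P)=(d-m,\,q_Y+1-c)$, which becomes $(d-m,q)$ under $q_Y=q+c-1$, while $H^{\dim Y-q_Y,\,q_Y}_{\mathrm{prim}}(Y)$ descends through the Cayley isomorphism to $H^{m-c-q-1,\,q}_{\mathrm{prim}}(X)$; and the toric Jacobian ideal of $F$ in the Cox ring of $P$ is literally $J_{f_1,\dots,f_c}$ because $\partial F/\partial y_j=f_j$. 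The cited sources instead work directly on $\mathbb{P}^{m-1}$: Dimca via residues of meromorphic forms on the complement of $X$, Konno and Terasoma via Koszul-complex computations of the infinitesimal variation of Hodge structure. Your route buys conceptual economy at the price of two imported black boxes, and you have correctly identified where the real content sits. The one step you should not leave at the level of ``I expect'': the residue map for hypersurfaces in toric varieties is only known to be an isomorphism onto primitive cohomology in every bidegree (in particular at the $(p,p)$ spot of the middle cohomology, where ambient algebraic classes can interfere) under hypotheses such as ampleness and quasismoothness of $Y$ in a complete simplicial $P$; here both hold, since $d_j\geq 1$ makes $\mathcal{O}_P(1)$ ample and transversality of $X$ gives quasismoothness of $Y$ exactly as you say, but this is the citation that must actually be pinned down for the argument to close.
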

	
	Now, let $X$ be a smooth Calabi-Yau threefold realized as a quartic hypersurface in $G(2,4)$. View $X$ as a hypersurface in $\mathbb{P}^{5}$ determined by a quartic and the quadratic Pl\"ucker relation. We have $m=6$ and $c=2$. Thus $H_{\mathrm{prim}}^{(3,0)}(X)$ is isomorphic to the elements of the Jacobian ring $R_{f_1, f_2}$ of bigrading $(4+2-6,0)=(0,0)$, that is, the elements generated by constants. Similarly, $H_{\mathrm{prim}}^{(2,1)}(X)$ is isomorphic to the elements of the Jacobian ring $R_{f_1,  f_2}$ of bigrading $(0,1)$. Such elements are generated by monomials that have degree 4 in the Pl\"ucker coordinates (which we identify with the $x$-variables) and degree 1 in the $y$-variables. 
	
	To work with elements of the Jacobian ring, we use Gr\"obner basis computations in the computer algebra system Magma (cf. \cite{magma}) over the ring $\mathbb{Q}(t)[p_{12}, \dots, p_{34}, y_1, y_2]$.
	
	\begin{example}
	For the arrow pencil $X^{2,4}_t$, $R_{f_1,  f_2}^{(0,0)}$ is a one-dimensional vector space generated by any constant, and $R_{f_1,  f_2}^{(0,1)}$ is an 89-dimensional vector space. Thus, we have checked that $h^{3,0}(X^{2,4}_t) = 1$ and $h^{2,1}(X^{2,4}_t) = 89$, as befits a smooth Calabi-Yau threefold in $G(2,4)$.
	\end{example}
	
Next we study the $H_{4,2}$-invariant subspace of $H^{2,1}(X^{2,4}_t)$. We do so by computing generators for $H_{4,2}$ and checking the induced action on monomials with degree 4 in the Pl\"ucker coordinates and degree 1 in the $y$-variables.
	
	\begin{example}\label{ex:g24fixed}
		$(H^{2,1}(X^{2,4}_t))^{H_{4,2}}$ is a 5-dimensional vector space spanned by elements corresponding to the $H_{4,2}$-invariant monomials
		
		\[y_1 p_{34}^4, y_1 p_{14}^2 p_{23}^2, y_1 p_{13}^2 p_{24}^2, y_1 p_{24}^4, y_1 p_{23}^4.\] 
	\end{example}
	
But 	$(H^{2,1}(X^{2,4}_t))^{H_{4,2}} \cong H^{2,1}(Y^{2,4}_t)$. We have thus established a counterexample to Conjecture~\ref{Con:CDKmirror}:
	
	\begin{counterexample}\label{ce:g24}
		The pencil $Y^{2,4}_t$ has $h^{2,1} = 5$, while the classical mirror to Calabi-Yau threefold hypersurfaces in $G(2,4)$ has $h^{2,1} = 1$.
	\end{counterexample}
	
	\section{Middle cohomology of arrow pencils}\label{S:griffithsRing}
	
	The techniques used to establish Counterexample~\ref{ce:g24} depended on realizing members of the arrow pencil $X^{2,4}_t$ as complete intersections. In order to study higher-dimensional examples, we must develop techniques appropriate for more general Grassmannian hypersurfaces.
	
	\subsection{Griffiths ring for Grassmannian hypersurfaces}
	
	Set $N = r(n-r)-1$. Our goal is to understand the dimension of the vector space $(H^{N-1,1}(X^{r,n}_t))^{H_{n,r}}$: this measures the size of the space of complex deformations of  the arrow pencil $X^{r,n}_t$. 
	
	We will use a construction of a Griffiths ring for Grassmannian hypersurfaces described by Fatighenti and Mongardi in  \cite{FM}. This construction is phrased in terms of the vanishing cohomology. For smooth hypersurfaces in projective space, the notions of vanishing cohomology and primitive cohomology coincide. In our setting, we have a smooth Calabi-Yau hypersurface $X$ embedded in a Grassmannian $G(r,n)$ via a map $\iota$. The pushforward $\iota_*: H_{N-1}(X) \to H_{N-1}(G(r,n))$ on homology induces a pushforward map $\iota_*: H^{N-1}(X) \to H^{N+1}(G(r,n))$ on cohomology. The \emph{vanishing cohomology} $H^*_{\mathrm{van}}$ is the kernel 
	$$\mathrm{Ker}(\iota_*: H^{N-1}(X) \to H^{N+1}(G(r,n)))$$ 
	of this map. 
	
	A different subspace of cohomology, the \emph{hyperplane cokernel}, will take the role played by the subspace spanned by a hyperplane class in projective space.
	
	\begin{definition}[\cite{FM}, Definition 3.7]
		Let $j \leq \frac{N+1}{2}$. The \emph{hyperplane cokernel} $C_{j-1,j}$ is the cokernel of the injective map on the cohomology of the Grassmannian $G(r,n)$ given by multiplication by the hyperplane class:
		\[0 \to H^{j-1,j-1}(G(r,n)) \to H^{j,j}(G(r,n)).\]
	\end{definition}
	
	We wish to define an ideal that will take the place of the Jacobian ideal for hypersurfaces in projective space. Let $\mathcal{I}$ be the collection of $r$-tuples $(i_1, \dots, i_r)$ of elements in $\{1, \dots, n\}$ such that $i_1 < i_2 < \dots < i_r$ and fix $n$ variables $x_1, \dots, x_n$. One may identify the Pl\"ucker coordinate $p_{i_1\dots i_r}$ with the wedge product $x_{i_1} \wedge \dots \wedge x_{i_r}$. Let $P$ be the ideal generated by the Pl\"ucker relations. Let $S^{r,n}$ be the coordinate ring of the (affine cone over) the Grassmannian $G(r,n)$; we identify $S^{r,n}$ with $\mathbb{C}[p_I]_{I \in \mathcal{I}}/P$. Define derivations $D^i_j$ on $\C[x_1, \dots, x_n]$ by $D^i_j = x_i \frac{\partial}{\partial x_j}$. The derivations $D^i_j$ extend to wedge products of $x_1, \dots, x_n$ by the Leibniz rule. We then have an induced action on Pl\"ucker coordinates given by
	\begin{equation}
		D^i_j \, p_{i_1\dots i_r} =
		\begin{cases}
			0 & \mbox{if }j \notin \{i_1, \dots, i_r\}\mbox{ or } \{i,j\} \subset \{i_1, \dots, i_r\}\\
			\pm p_{i_1\dots i \dots \hat{j} \dots i_r} & \mbox{if }i \notin \{i_1, \dots, i_r\}\mbox{ and }j \in \{i_1, \dots, i_r\}.
		\end{cases}
	\end{equation}
	Here $\hat{j}$ indicates that this term is omitted. The choice of sign for $\pm p_{i_1\dots i \dots \hat{j} \dots i_r}$ is positive if an even number of swaps are needed to place $x_{i_1} \wedge \dots \wedge x_i \widehat{x_{j}} \wedge \dots \wedge x_{i_r}$
	in increasing order by index, and negative otherwise. Applying the Leibniz rule once again, we obtain an induced action of the $D^i_j$ on $\mathbb{C}[p_I]_{I \in \mathcal{I}}$ and thus on $S^{r,n}$.
	
	\begin{definition}[\cite{FM}, Definition 3.2]
		The \emph{generalized Jacobian ideal} or \emph{Griffiths ideal} of a smooth hypersurface given by a polynomial $f$ in Pl\"ucker coordinates in the Grassmannian $G(r,n)$ is the ideal $J_f$ of $S^{r,n}$ generated by the image of $f$ and the set of polynomials given by
		$$\{D^i_j \, f, D^i_i \, f - D^j_j \,f \mid i, j = 1, \dots, n, i \neq j \}.$$
		The \emph{(generalized) Jacobian ring} or \emph{(generalized) Griffiths ring} $R_f$ is the quotient $S^{r,n}/J_f$.
	\end{definition}
	
	We write $[R_f]_m$ for the vector space spanned by elements of $R_f$ represented by monomials of degree $m$.
	
	We may now give a description of the vanishing cohomology of a Grassmannian hypersurface in terms of the generalized Jacobian ring:
	
	\begin{theorem}[\cite{FM}, Theorem 1.1]
		Let $X$ be a smooth hypersurface in the Grassmannian $G(r,n)$ given by a polynomial $f$ of degree $d$ in Pl\"ucker coordinates, and assume $d \geq n-1$. If $N=\dim X$ is even, then
		\[[R_f]_{(q+1)d-n} \cong H^{N-q,q}_\mathrm{van}(X, \mathbb{C}). \]
		If $N=\dim X$ is odd, then
		\[[R_f]_{(q+1)d-n} \cong H^{N-q,q}_\mathrm{van}(X, \mathbb{C}) \oplus \delta_{q, \frac{N+1}{2}} C_{q-1,q}. \]
		Here $\delta_{q, \frac{N+1}{2}}$ is the Kronecker delta.
	\end{theorem}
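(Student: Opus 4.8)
The plan is to transplant the classical Griffiths residue calculus (the projective-space theorem of \cite{konno, terasoma, dimca} recalled above) from $\mathbb{P}^{m-1}$ to the Pl\"ucker cone over $G(r,n)$, organizing everything around a residue exact sequence, a dictionary between meromorphic top-forms and the coordinate ring $S^{r,n}$, and a vanishing theorem forcing the pole-order filtration to compute the Hodge filtration. First I would set $U = G(r,n) \setminus X$ and invoke the Gysin (residue) sequence
\[ \cdots \to H^{N+1}(G(r,n)) \xrightarrow{j^*} H^{N+1}(U) \xrightarrow{\mathrm{Res}} H^{N}(X) \xrightarrow{\iota_*} H^{N+2}(G(r,n)) \to \cdots, \]
so that the image of $\mathrm{Res}$ is exactly $H^N_{\mathrm{van}}(X) = \mathrm{Ker}(\iota_*)$. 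Since $\mathrm{Res}$ is strict for the Hodge filtration and carries a class of pole order $q+1$ into $F^{N-q}$, the task reduces to representing $H^{N+1}(U)$ by rational $(N+1)$-forms with controlled poles along $X$.

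Second, I would set up the dictionary. Because $K_{G(r,n)} = \mathcal{O}(-n)$ in the Pl\"ucker polarization, a rational top-form $\omega = P\,\Omega/f^{q+1}$, with $\Omega$ a fixed meromorphic generator of $K_{G(r,n)}$ pulled back from the cone, is regular on $U$ precisely when $\deg P = (q+1)d - n$; this is the source of the grading $[R_f]_{(q+1)d-n}$ in the statement. Applying the Lie derivative along the vector fields generated by the derivations $D^i_j$, which are the infinitesimal $\GL_n$-symmetries of $\wedge^r \mathbb{C}^n$, produces exact forms and lowers pole order modulo the numerators $D^i_j f$. Since scalar matrices act trivially on $G(r,n)$, only the $\mathfrak{sl}_n$-part contributes, and this part is spanned exactly by the $D^i_j$ with $i \ne j$ together with the traceless combinations $D^i_i f - D^j_j f$. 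Thus the numerators mapping to zero in cohomology are precisely those in $J_f$, and I obtain a well-defined injection $[R_f]_{(q+1)d-n} \hookrightarrow \mathrm{Gr}_F^{N-q} H^{N+1}(U)$.

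The crux, and the step I expect to be the main obstacle, is surjectivity together with the identification of the pole-order filtration with the Hodge filtration. This amounts to degeneration of the relevant spectral sequence, i.e.\ to the vanishing of the higher cohomology $H^{>0}$ of the twisted sheaves of differentials $\Omega^p_{G(r,n)}(kd)$ in the range of $p$ and $k$ dictated by $q$. Unlike the projective or toric case, Grassmannians do not satisfy Bott vanishing for arbitrary twists, so I would establish the required vanishings through the Borel--Weil--Bott theorem for the homogeneous bundles $\Omega^p_{G(r,n)}$. It is precisely this analysis that forces the hypothesis $d \geq n-1$, which supplies the positivity needed for the vanishing to hold on the nose. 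The difficulty is to pin down the exact numerical range rather than an asymptotic statement, so that the graded isomorphism holds in every degree claimed and not merely for $d$ sufficiently large.

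Finally, I would treat the middle-degree correction. Grassmannian cohomology is of pure $(p,p)$-type, so when $N$ is odd the ambient group $H^{N+1}(G(r,n)) = H^{(N+1)/2,(N+1)/2}(G(r,n))$ is nonzero, and the leftmost map $j^*$ in the Gysin sequence contributes to $H^{N+1}(U)$ in bidegree $q = \tfrac{N+1}{2}$ beyond the residues of vanishing classes. Tracking this ambient contribution through the ring-theoretic identification, and comparing it with multiplication by the hyperplane class $\sigma_1$ (through which $[X] = d\,\sigma_1$ acts), yields exactly the cokernel $C_{q-1,q}$ of the definition, and hence the extra summand $\delta_{q,(N+1)/2}\,C_{q-1,q}$. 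When $N$ is even one has $H^{N+1}(G(r,n)) = 0$, so no correction term arises and the clean isomorphism holds.
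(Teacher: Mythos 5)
First, a point of orientation: the paper you were given does not prove this statement at all. It is quoted from Fatighenti--Mongardi \cite{FM} (their Theorem 1.1) and used as a black box, so there is no internal proof to compare against; your proposal can only be judged on its own merits and against the strategy of the cited source. On that score, your skeleton is the right one and matches the Griffiths--Konno tradition that \cite{FM} builds on: the Gysin sequence $H^{N+1}(G(r,n)) \to H^{N+1}(U) \xrightarrow{\mathrm{Res}} H^{N}(X) \xrightarrow{\iota_*} H^{N+2}(G(r,n))$ correctly identifies $H^N_{\mathrm{van}}$ with the image of the residue map; the degree count $\deg P = (q+1)d - n$ correctly comes from $K_{G(r,n)} \cong \mathcal{O}(-n)$ (with the minor caveat that your $\Omega$ is a section of $K_{G(r,n)}(n)$, i.e.\ the contraction of the Euler field with a volume form on the cone, not a meromorphic generator of $K_{G(r,n)}$ itself); the generators $D^i_j f$ ($i \neq j$) and $D^i_i f - D^j_j f$ are exactly the images of $\mathfrak{sl}_n \cong H^0(T_{G(r,n)})$, which is why they play the role of the partial derivatives; and your account of the middle-degree correction is correct: for $N$ even, $H^{N+1}(G(r,n))=0$ since Grassmannian cohomology is even, while for $N$ odd the Lefschetz isomorphism $H^{N-1}(G(r,n)) \cong H^{N-1}(X)$ turns $\mathrm{im}(\iota_*\colon H^{N-1}(X) \to H^{N+1}(G(r,n)))$ into the image of multiplication by $\sigma_1$, producing precisely the summand $C_{q-1,q}$ at $q = \frac{N+1}{2}$.

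The genuine gap is the one you yourself flag and then defer: the vanishing theorems. Everything that makes the theorem true in the stated numerical range --- surjectivity of your map, the identification of its kernel with exactly $J_f$ (rather than some larger ideal), and the coincidence of the pole-order filtration with the Hodge filtration on $H^{N+1}(U)$ --- reduces to vanishing of $H^{a}(G(r,n), \Omega^p_{G(r,n)}(k))$ for $a>0$ in a precise range of $(p,k)$ determined by $q$ and $d$. This is not routine: Bott vanishing genuinely fails for Grassmannians other than projective space ($G(2,4)$ is the standard counterexample), so one cannot wave at positivity; one needs the explicit Borel--Weil--Bott/Snow computations of the cohomology of twisted holomorphic forms on $G(r,n)$, and it is exactly this analysis that produces the hypothesis $d \geq n-1$ and the sharp degree bounds. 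In \cite{FM} this technical core is supplied by those representation-theoretic vanishing results, packaged through the Hodge theory of affine cones (Di Natale--Fatighenti), rather than re-derived from scratch. As written, your argument establishes at best a well-defined injection $[R_f]_{(q+1)d-n} \hookrightarrow \mathrm{Gr}_F^{N-q}H^{N+1}(U)$ in each degree; without carrying out the vanishing analysis, the isomorphism --- the actual content of the theorem --- remains unproved.
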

	
	\subsection{Arrow pencils and arrow pencil alternatives}
	
	We now apply the generalized Jacobian ring framework to study arrow pencils and related families. We begin by reconsidering Example~\ref{ex:g24fixed} using the generalized Jacobian ideal framework. In order to do so, we use the SageMath computer algebra system (\cite{sage}) to compute the generalized Jacobian ideal, followed by Gr\"obner basis computations in Magma over the ring $\mathbb{Q}(t)[p_{12}, \dots, p_{34}]$. We conclude:
	
	\begin{example}\label{ex:g24fixed2}
		$(H^{2,1}(X^{2,4}_t))^{H_{4,2}}$ is a 5-dimensional vector space spanned by elements corresponding to the $H_{4,2}$-invariant monomials
		
		\[p_{34}^4, p_{14}^2 p_{23}^2, p_{13}^2 p_{24}^2, p_{24}^4, p_{23}^4.\] 
	\end{example}

One question that naturally arises is whether one may modify the arrow pencil in some way to obtain a family with the desired Hodge invariants. Each of the monomials in the arrow pencil $X^{2,4}_t$ is invariant under the action of the group $H_{4,2}$, but these are not the only monomials of degree 4 in the Pl\"ucker coordinates invariant under the group action. The complete list (in lexicographic order) is $\mathcal{M} :=$ $\{p_{12}^{4}, p_{12}^{2} p_{34}^{2}$, $p_{12} p_{13} p_{24} p_{34}$, $p_{12} p_{14} p_{23} p_{34}$, $p_{13}^{4}, p_{13}^{2} p_{24}^{2}$, $p_{13} p_{14} p_{23} p_{24}$, $p_{14}^{4}, p_{14}^{2} p_{23}^{2}$, $p_{23}^{4}, p_{24}^{4}, p_{34}^{4}\}$. Recall that the arrow pencil $X^{2,4}_t$ is defined by deforming the product of the frozen variables, which corresponds to the point of maximally unipotent monodromy, by the product of our parameter $t$ and the sum of $4$th powers of frozen variables. We may thus subdivide the list of $H(2,4)$-fixed monomials into pure fourth powers, the pairs of squares $\{p_{12}^{2} p_{34}^{2}, p_{13}^{2} p_{24}^{2}, p_{14}^{2} p_{23}^{2}\}$, and the products of four distinct monomials $\{p_{12} p_{13} p_{24} p_{34}, p_{12} p_{14} p_{23} p_{34}, p_{13} p_{14} p_{23} p_{24}\}$. Modifying the deforming polynomial in the arrow pencil by adding either pairs of squares, the non-frozen products of four distinct monomials, or both, we obtain four pencils (including the arrow pencil itself).

Our next theorem (a restatement of our \hyperref[T:main]{Main Theorem}) shows that all of these $H_{4,2}$-invariant pencils have a similar piece of cohomology fixed by the group action.
	
\begin{theorem}
	Let $Z^{2,4}_t$ be any of the following $H_{4,2}$-invariant pencils of Calabi-Yau threefolds in the Grassmannian $G(2,4)$: 
	
	\begin{align*}
	t \left( p_{12}^4 + \cdots + p_{34}^4  \right) + p_{12} p_{23} p_{34} p_{14} & = 0 \\
	t \left( p_{12}^{4}  + \cdots + p_{34}^4  +  p_{14}^{2} p_{23}^{2} +   p_{13}^{2} p_{24}^{2} +  p_{12}^{2} p_{34}^{2} \right) + p_{12} p_{14} p_{23} p_{34} &= 0\\
	t \left( p_{12}^{4} + \cdots + p_{34}^4 +  p_{13} p_{14} p_{23} p_{24} + p_{12} p_{13} p_{24} p_{34}\right) + p_{12} p_{14} p_{23} p_{34} & = 0 \\
	t \left(p_{12}^{4} + \cdots + p_{34}^4 + p_{14}^{2} p_{23}^{2} +  p_{13} p_{14} p_{23} p_{24} + p_{13}^{2} p_{24}^{2} + p_{12} p_{13} p_{24} p_{34} +  p_{12}^{2} p_{34}^{2} \right) & \\ 
	+ p_{12} p_{14} p_{23} p_{34} & = 0.
\end{align*}
	
		Then $(H^{2,1}(Z^{2,4}_t))^{H_{4,2}}$ is a 5-dimensional vector space.
\end{theorem}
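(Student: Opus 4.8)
The plan is to identify $(H^{2,1}(Z^{2,4}_t))^{H_{4,2}}$ with the $H_{4,2}$-invariant part of a single graded piece of the generalized Griffiths ring of \cite{FM}, and then to evaluate that invariant part by a symbolic computation. First I would note that $G(2,4)$ has cohomology concentrated in even degrees, so the Gysin pushforward out of the odd middle cohomology $H^3(Z^{2,4}_t)$ lands in an odd-degree cohomology group of $G(2,4)$ and is therefore zero; consequently the vanishing cohomology agrees with the full middle cohomology, and in particular $H^{2,1}_{\mathrm{van}}(Z^{2,4}_t) = H^{2,1}(Z^{2,4}_t)$. Each defining polynomial $f$ is homogeneous of degree $d = 4$ in the Pl\"ucker coordinates, so with $N = \dim Z^{2,4}_t = 3$ and $q = 1$ we have $d = 4 \geq n-1 = 3$ and $q = 1 \neq \tfrac{N+1}{2} = 2$; thus the theorem of \cite[Theorem 1.1]{FM} gives
\[ [R_f]_{(q+1)d-n} = [R_f]_4 \;\cong\; H^{2,1}_{\mathrm{van}}(Z^{2,4}_t) = H^{2,1}(Z^{2,4}_t), \]
with no contribution from the hyperplane cokernel.

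Next I would promote this to an isomorphism of $H_{4,2}$-representations. The group $H_{4,2}$ is realized by diagonal elements of $\GL(\C,n)$ that preserve each $Z^{2,4}_t$, since $f$ is fixed monomial by monomial; it therefore acts on both sides of the \cite{FM} isomorphism, which is natural with respect to such automorphisms, so the isomorphism restricts to one on $H_{4,2}$-invariants, giving $(H^{2,1}(Z^{2,4}_t))^{H_{4,2}} \cong ([R_f]_4)^{H_{4,2}}$. To make the right-hand side computable I would check that the Griffiths ideal $J_f$ is $H_{4,2}$-stable: writing a group element as $g = \mathrm{diag}(\zeta_1,\dots,\zeta_n)$, conjugation sends each root-type derivation $D^i_j$ with $i \neq j$ to $\zeta_i \zeta_j^{-1} D^i_j$ and fixes each Cartan-type combination $D^i_i - D^j_j$; since $f$ is invariant, every generator $D^i_j f$ is merely rescaled and every $D^i_i f - D^j_j f$ is fixed, so $g \cdot J_f = J_f$. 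Because the $H_{4,2}$-action on $[S^{2,4}]_4$ diagonalizes into monomial eigenspaces and $J_f$ is graded and $H_{4,2}$-stable, taking invariants commutes with passing to the quotient ($H_{4,2}$ is finite and we work in characteristic zero). Hence $([R_f]_4)^{H_{4,2}}$ is exactly the image in $R_f$ of the span of the twelve $H_{4,2}$-invariant degree-$4$ monomials $\mathcal{M}$.

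The remaining task is the linear-algebra computation for each of the four polynomials $f$. I would compute $J_f$ inside $S^{2,4} = \C[p_I]/P$ using the explicit action of the $D^i_j$ on Pl\"ucker coordinates, and then, working over $\Q(t)$, determine the dimension of the span of $\mathcal{M}$ modulo $J_f$ by a Gr\"obner basis reduction, exactly as in the computation preceding Example~\ref{ex:g24fixed2}. The claim is that in each case $\mathcal{M}$ spans a $5$-dimensional quotient, with surviving classes matching the list $p_{34}^4,\, p_{14}^2 p_{23}^2,\, p_{13}^2 p_{24}^2,\, p_{24}^4,\, p_{23}^4$ found for the arrow pencil. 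The main obstacle is not any single step but the uniformity of the answer: one must verify that the arrow pencil and each of its three modifications --- obtained by adjoining the pairs of squares, the two non-frozen products of four distinct Pl\"ucker coordinates, or both --- all yield precisely $5$, and that this count is the dimension of the $\Q(t)$-vector space and hence valid at all but finitely many (smooth) members. Confirming this genuinely requires the symbolic computation, since each adjoined monomial alters $J_f$ and could a priori change the rank of the relations it imposes among the elements of $\mathcal{M}$.
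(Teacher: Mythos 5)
Your proposal follows essentially the same route as the paper: identify $(H^{2,1}(Z^{2,4}_t))^{H_{4,2}}$ with the $H_{4,2}$-invariant part of $[R_f]_4$ via \cite[Theorem 1.1]{FM} and then verify the dimension by a SageMath/Magma Gr\"obner basis computation over $\mathbb{Q}(t)$; your added justifications (vanishing cohomology equals middle cohomology since $G(2,4)$ has no odd cohomology, $\delta_{1,2}=0$ kills the hyperplane cokernel, equivariance of the isomorphism and $H_{4,2}$-stability of $J_f$) are correct and merely make explicit what the paper leaves implicit. One small point: the paper reports that for the three modified pencils the invariant subspace is spanned by $p_{23}^4$, $p_{13}p_{14}p_{23}p_{24}$, $p_{34}^4$, $p_{14}^2 p_{23}^2$, $p_{24}^4$ rather than by exactly the same monomial list as the arrow pencil, but this does not affect the dimension count of $5$.
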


\begin{proof}
	The proof follows by using the SageMath computer algebra system (\cite{sage}) to compute the generalized Jacobian ideal, then using Gr\"obner basis computations in Magma over the ring $\mathbb{Q}(t)[p_{12}, \dots, p_{34}]$ to compute the subspace of the generalized Jacobian ring spanned by the $H_{4,2}$-invariant degree 4 monomials $\mathcal{M}$ in each case. For the pencils other than the arrow pencil, this subspace is spanned by the monomials $p_{23}^4$, $p_{13}p_{14}p_{23}p_{24}$, $p_{34}^4$, $p_{14}^2 p_{23}^2$,
	$p_{24}^4$, which are distinct in the generalized Jacobian ring in each case.	
\end{proof}
	
	Next, we consider Calabi-Yau fivefolds realized as hypersurfaces in $G(2,5)$. Gr\"obner basis computations over a ring with coefficients in $\mathbb{Q}(t)$ are highly computationally expensive. Instead of working directly with this ring, we compute $(H^{2,1}(X^{2,5}_t))^{H(2,5)}$ for arbitrarily chosen specific values of $t$. We find:
	
	\begin{example}\label{ex:g25fixed}
		For $t=2, 3, 7, 13$, $(H^{2,1}(X^{2,5}_t))^{H(2,5)}$ is an 11-dimensional vector space spanned by elements corresponding to the $H(2,5)$-invariant monomials 
		
		\begin{align*}
			p_{24}^5, p_{35}^5, p_{14}p_{15}p_{23}p_{24}p_{35}, p_{15}^2p_{23}p_{24}p_{34}, p_{13}p_{14}p_{25}^2p_{34}, \\
			p_{23}^5, p_{25}^5, p_{34}^5, p_{13}p_{15}p_{24}p_{25}p_{34}, p_{45}^5, p_{14}^2p_{23}p_{25}p_{35}.
		\end{align*}
	\end{example}

This computation suffices to yield the following counterexample:
	
		\begin{counterexample}\label{ce:g25}
		The pencil $Y^{2,5}_t$ is not a pencil of Calabi-Yau fivefolds with smooth members all satisfying $h^{2,1} = 1$.
	\end{counterexample}
	
Note that it can be hard to generate examples of Calabi-Yau $n$-folds where $h^{n-1,1}$ is small. For example, the smallest $h^{n-1,1}$ for Calabi-Yau fivefolds realized as complete intersections in some $\mathbb{P}^k$ is 289, which occurs in the case of complete intersections of 6 quadrics in $\mathbb{P}^{11}$. (see  \cite[Table 5]{HLS}). Thus, the strategy of studying Calabi-Yau varieties realized as quotients of highly symmetric pencils may yield new examples of Calabi-Yau $n$-folds with small $h^{n-1,1}$ of geographical interest.


\begin{thebibliography}{DKSSVW18a}
	
	\bibitem[AS06]{AS}
	Adolphson, Alan and Sperber, Steven: On the {J}acobian ring of a complete intersection. J. Algebra 304, 1193--1227 (2006).
	
	\bibitem[AS16]{ASgkz} Adolphson, A., Sperber, S.: {$A$}-hypergeometric series and the {H}asse-{W}itt matrix of a
	hypersurface, Finite Fields Appl. \textbf{41}, 55--63 (2016).
	
	\bibitem[BCFKvS98]{conifold}
	Batyrev, Victor V., Ciocan-Fontanine, Ionu\c{t}, Kim, Bumsig, and van Straten, Duco: Conifold transitions and mirror symmetry for {C}alabi-{Y}au complete intersections in {G}rassmannians. Nuclear Phys. B 514, 640--666 (1998).
	
	\bibitem[BV21a]{BVi}
	Beukers, F., Vlasenko, M.: Dwork crystals {I}, Int. Math. Res. Not. IMRN 12, 8807--8844 (2021).
	
	\bibitem[BV21b]{BVii}
	Beukers, F., Vlasenko, M.: Dwork crystals {II}, Int. Math. Res. Not. IMRN 6, 4427--4444 (2021).
	
	\bibitem[BV23]{BViii}
	Beukers, F., Vlasenko, M.: Dwork crystals {III}: {F}rom {E}xcellent {F}robenius {L}ifts
	{T}owards {S}upercongruences, Int. Math. Res. Not. IMRN 23, 20433--20483 (2023).

\bibitem[BCP97]{magma}
Bosma, Wieb, Cannon, John, and Playoust, Catherine: The {M}agma algebra system. {I}. {T}he user language, J. Symbolic Comput. 24, 235--265 (1997).

\bibitem[CDK21]{CDK} 
Coates, T., Doran, C., and Kalashnikov, E.: Unwinding toric degenerations and mirror symmetry for Grassmannians. Forum Math. Sigma 10, Paper No. e111, 33 (2022).

\bibitem[CDRV00]{CORV}
Candelas, P., de la Ossa, X., Rodr\'{i}guez Villegas, F.: Calabi--Yau manifolds over finite fields, {I}. \url{https://arxiv.org/abs/hep-th/0012233} (2000).  

\bibitem[CDRV01]{CORV2} 
Candelas, P., de la Ossa, X., Rodr\'{i}guez Villegas, F.: Calabi--Yau manifolds over finite fields, {II}.  In: Calabi--Yau varieties and mirror symmetry.  Fields Inst. Commun., vol. 38, 121-157.  Amer. Math. Soc., Providence (2003).
%
\bibitem[Dim95]{dimca}
Dimca, Alexandru: Residues and cohomology of complete intersections. Duke Math. J. \textbf{78}, 89--100 (1995).

\bibitem[DKSSVW18]{supersquare1}
Doran, C.F., Kelly, T.L., Salerno, A., Sperber, S., Voight, J., Whitcher, U.: Zeta functions of alternate mirror Calabi-Yau families.  Israel J. Math. \textbf{228}, no. 2, 665--705 (2018).

\bibitem[DKSSVW20]{supersquare2}
Doran, C.F., Kelly, T.L., Salerno, A., Sperber, S., Voight, J., Whitcher, U.: Hypergeometric decomposition of symmetric K3 quartic pencils. Res. Math. Sci. \textbf{7}, article 7 (2020).

\bibitem[Dwo69]{padic} 
Dwork, B., {$p$}-adic cycles, Inst. Hautes \'Etudes Sci. Publ. Math. \textbf{37}, 27--115 (1969).


\bibitem[FM21]{FM}
Fatighenti, Enrico and Mongardi, Giovanni: A note on a {G}riffiths-type ring for complete intersections	in {G}rassmannians. Math. Z. 299, 1651--1672 (2021).

\bibitem[GP90]{GP}
Greene, B. R. and Plesser, M. R.: Duality in \{C\}alabi-\{Y\}au moduli space. Nuclear Phys. B 338 (1990), 15--37.

\bibitem[HLS09]{HLS}
Haupt, Alexander S., Lukas, Andre and Stelle, K. S.: M-theory on {C}alabi-{Y}au five-folds. J. High Energy Phys. 5, 069 (2009).

\bibitem[HLYY22]{HLYY}
Huang, A., Lian, B., Yau, S.-T., Yu, C.: Hasse-Witt matrices, unit roots and period integrals. Math. Ann. (2022).

\bibitem[Kad06]{kadir} 
Kadir, S.: Arithmetic mirror symmetry for a two-parameter family of {C}alabi-{Y}au manifolds, in Mirror symmetry. {V}, AMS/IP Stud. Adv. Math., \textbf{38}, 35--86, Amer. Math. Soc., Providence, RI (2006).

\bibitem[Kat73]{katzCongruence}
Katz, N.M.: Une formule de congruence pour la fonction $\zeta$, S.G.A. 7 II, Lecture Notes in Mathematics
\textbf{340}, Springer (1973).

\bibitem[Kon91]{konno}
Konno, Kazuhiro: On the variational {T}orelli problem for complete intersections. Compositio Math. \textbf{78}, 271--296 (1991).

\bibitem[Kre07]{kresch}
Kresch, A.: Flag varieties and {S}chubert calculus, \emph{Algebraic groups} 73--86, Universit\"{a}tsverlag G\"{o}ttingen, G\"{o}ttingen (2007).

\bibitem[Lib96]{libgober}
Libgober, A.: Differential forms on complete intersections and a related quotient module. Proceedings of the {H}irzebruch 65 {C}onference on {A}lgebraic {G}eometry ({R}amat {G}an, 1993). Israel Math. Conf. Proc. 9, 295--305. Bar-Ilan Univ., Ramat Gan (1996).

\bibitem[LT93]{LT}
Libgober, A., Teitelbaum, J.: Lines on {C}alabi-{Y}au complete intersections, mirror symmetry, and {P}icard-{F}uchs equations. Internat. Math. Res. Notices. No. 1, 29--39 (1993).

\bibitem[Mav99]{mavlyutov}
Mavlyutov, Anvar R.: Cohomology of complete intersections in toric varieties. Pacific J. Math. \textbf{191}, 133--144 (1999).

\bibitem[Pet18]{peternell}
Peternell, Kathrin Natalie: Coherent Sheaves on Calabi-Yau manifolds, Picard-Fuchs equations and potential functions. Dissertation, Albert Ludwigs University of Freiburg, 2018.


\bibitem[Rie08]{rietsch}
Rietsch, Konstanze: A mirror symmetric construction of {$qH^\ast_T(G/P)_{(q)}$}. Adv. Math. 217, 2401--2442 (2008).

\bibitem[S{\etalchar{+}}24]{sage}
\emph{{S}ageMath, the {S}age {M}athematics {S}oftware {S}ystem ({V}ersion
	10.1)}, The Sage Developers, 2024, {\tt https://www.sagemath.org}.

\bibitem[SW22]{SW}
Salerno, Adriana and Whitcher, Ursula: Hasse-Witt matrices and mirror toric pencils. Advances in Theoretical and Mathematical Physics Vol. 26, no.9, 3345-3375 (2022).

\bibitem[SWY24]{code}
Salerno, A., Whitcher, U., and Yu, C. Deformations of highly symmetric Calabi-Yau Grassmannian hypersurfaces (computational tools). Zenodo (2024). \url{https://doi.org/10.5281/zenodo.10570227}

\bibitem[Ter90]{terasoma}
Terasoma, Tomohide: Infinitesimal variation of {H}odge structures and the weak global {T}orelli theorem for complete intersections. Ann. of Math. (2) \textbf{132}, 213--235 (1990).

\bibitem[Vla18]{vlasenko} 
Vlasenko, M.: Higher Hasse–Witt matrices, Indag. Math. \textbf{29} 1411–1424 (2018).

\end{thebibliography}
\end{document}